\newtheorem{definition}{Definition}
\newtheorem{theorem}[definition]{Theorem}
\newtheorem{lemma}[definition]{Lemma}
\newtheorem{conjecture}[definition]{Conjecture}
\def\henning#1{}%
\def\maya#1{}%
\newcommand{\comment}[1]{}
\newcommand{\emtext}[1]{\text{\em #1}}
\newcommand{\sm}{\setminus}
\title{Minimal bricks have many vertices of small degree}
\author{Henning Bruhn and Maya Stein\footnote{Supported by Fondecyt grant 11090141.}}
\date{}
\begin{document}
\maketitle

\begin{abstract}
We prove that  every minimal brick on $n$ vertices has at least $ n/9$ vertices of degree at most $4$.
\end{abstract}

\section{Introduction}

A key element in matching theory is the notion of a brick. We briefly
and somewhat informally explain this notion and its role. For 
a much more detailed treatment we refer to the books of Lov\'asz' and Plummer~\cite{LP86} and Schrijver~\cite{LexBible}.

A \emph{matching} (a set of independent edges)
of a graph is \emph{perfect}
if every vertex is incident with a matching edge.
Consider a matching covered graph, that is a graph in which every
edge lies in some perfect matching. A tight cut of such a graph is 
a cut that meets every perfect matching in precisely one edge.  
Contracting one, or the other, side of a tight cut $F$ we obtain two new graphs (which preserve the perfect matching structure we had in the original graph). This operation is called a `split along the tight cut $F$'. 

Clearly, we can go on splitting along tight cuts in the newly obtained graphs until arriving at graphs that contain no tight cuts. 
It was shown by Lov\'asz~\cite{BricBracDecomp} that no matter in which order we choose the tight cuts we split along, we will essentially always arrive at the same set of cuts  and graphs. The obtained decomposition is generally called a `brick and brace decomposition' because the set of final graphs (without tight cuts) is divided into those that are bipartite -- called \emph{braces} -- and those that are not -- the \emph{bricks}. This decomposition allows to reduce several problems from matching theory  to  bricks (e.g.~a graph is Pfaffian if and only if its bricks are).

Both bricks and braces have been characterised by Edmonds, Lov\'asz and Pulleyblank~\cite{ELP} in other terms. We omit the characterisation of braces. For the one 
of bricks, let us first say that 
a graph $G$ is \emph{bicritical} if $G-\{u,v\}$ has a perfect matching
for every choice of distinct vertices $u$ and $v$. 
Now bricks are precisely the bicritical and $3$-connected graphs~\cite{ELP}. 
For practical purposes let us consider a brick to be defined this way.

\medskip

The focus of this paper lies on \emph{minimal bricks}: Those bricks $G$ 
for which $G-e$ ceases to be a brick  for every edge $e\in E(G)$.
Minimality often leads to sparsity in some respect. Minimal bricks are no exception:
It is known~\cite{NoTh06}  that any minimal brick on $n$ vertices has average degree at most $5-7/n$, unless it is one of four special bricks (the prism or the wheel $W_n$ for $n=4,6,8$). 
While thus minimal bricks do have vertices of degree $3$ or $4$, they may conceivably 
be very few in number, if the average degree is very close to $5$.
Of particular interest are vertices that attain the smallest degree
possible, which is $3$ for a brick.

De Carvalho, Lucchesi and Murty~\cite{CLM06} proved that any minimal brick 
contains a vertex of degree~$3$, which had been conjectured earlier by Lov\'asz; see~\cite{CLM06}. This was extended by Norine and Thomas, who showed the existence of $3$ such vertices, and then went on to pose the following stronger conjecture. 

\begin{conjecture}[Norine and Thomas~\cite{NoTh06}]\label{ratioconj}
There is an $\alpha>0$ so that every minimal brick $G$ contains
at least $\alpha |V(G)|$ vertices of degree~$3$.
\end{conjecture}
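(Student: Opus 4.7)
The plan is to factor Conjecture~\ref{ratioconj} into two ingredients: (i) a lower bound on the number of vertices of \emph{small} degree (degree at most $4$), which is exactly the main result announced in the abstract; and (ii) a structural comparison showing that, among vertices of degree at most $4$ in a minimal brick, a constant fraction have degree \emph{exactly} $3$. Writing $n_k$ for the number of degree-$k$ vertices, part (ii) amounts to an inequality of the form $n_4 \le C \cdot n_3$ for some absolute constant $C$; combined with $n_3 + n_4 \ge n/9$ this yields $n_3 \ge n/(9(C+1))$ and thus the conjecture with $\alpha = 1/(9(C+1))$.

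\textbf{Attack on (ii).} I would try a charging argument: assign each degree-$4$ vertex to a ``nearby'' degree-$3$ vertex, and bound the load on each degree-$3$ vertex by a constant. This rests on a local structural lemma of the form: \emph{every degree-$4$ vertex $v$ of a minimal brick has a degree-$3$ vertex at bounded graph distance}. The natural route is to exploit minimality at each of the four edges $e$ incident with $v$: since $G - e$ is not a brick, $e$ comes with a certificate explaining the failure (a separating set of size at most $2$, an obstruction to bicriticality, or a bipartition of $G - e$). The four certificates at a single degree-$4$ vertex should interact strongly enough to either exhibit a vertex of degree $3$ in a bounded neighbourhood of $v$, or allow a local reduction.

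\textbf{Main obstacle.} The delicate step is breaking the symmetry between degrees $3$ and $4$. The certificates for non-removability are essentially blind to whether the incident vertex has degree $3$ or $4$, which is precisely why the methods of this paper, and of the earlier work of De Carvalho--Lucchesi--Murty and of Norine--Thomas, stop at degree $\le 4$. A plausible way around this would be a reduction argument: around a degree-$4$ vertex $v$ with no degree-$3$ vertex at small distance, perform a local substitution yielding a strictly smaller minimal brick in which the ratio of degree-$3$ vertices has not decreased, and induct. The central technical difficulty is that bicriticality is a global property, so it is far from clear that a local modification preserves it together with $3$-connectedness; finding such a reduction operation is, I expect, exactly what separates the conjecture from the theorem of this paper and is the reason the conjecture is still stated as open.
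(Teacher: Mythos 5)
The statement you are asked to prove is Conjecture~\ref{ratioconj}, which is an \emph{open conjecture} of Norine and Thomas: the paper does not prove it, and explicitly presents Theorem~\ref{thmdeg4} (at least $\tfrac19|V(G)|$ vertices of degree at most $4$) only as ``further evidence'' for it. Your proposal does not prove it either. Step (i) of your plan is exactly the paper's Theorem~\ref{thmdeg4}, but step (ii) --- the inequality $n_4\le C\cdot n_3$ for minimal bricks --- is precisely the missing content, and your write-up supplies no proof of it: the ``local structural lemma'' that every degree-$4$ vertex has a degree-$3$ vertex at bounded distance is only conjectured, the charging argument is not carried out (even granting the lemma, you would still need to bound the number of degree-$4$ vertices charged to a single degree-$3$ vertex, which does not follow from bounded distance alone in a graph of unbounded maximum degree), and the proposed ``local reduction'' operation is never constructed. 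You candidly acknowledge that this is where the difficulty lies, so the proposal is a research plan with a confessed gap rather than a proof.

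To be concrete about why the gap is genuine: the paper's own machinery (brick-on-brick sequences, the density bookkeeping in \eqref{werte}, and Lemma~\ref{quadonquad}) only distinguishes degree $3$ from degree $\ge 5$ via the average-degree dichotomy in the proof of Theorem~\ref{thmdeg4}; in the low-average-degree case the counting argument inherently cannot separate degree $3$ from degree $4$. The authors' own discussion section explains that strengthening Lemma~\ref{quadonquad} to forbid chained extensions that raise fundament-vertex degrees fails, because such configurations do occur in minimal bricks (Figure~\ref{ladderplus}). So any proof of the conjecture needs a new idea beyond both your sketch and the paper's methods, and as it stands your argument establishes only what Theorem~\ref{thmdeg4} already gives.
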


Our main result yields further evidence for this conjecture. 

\begin{theorem}\label{thmdeg4}
Every minimal brick $G$ 
has at least $\frac 19|V(G)|$  vertices of degree at most~$4$.
\end{theorem}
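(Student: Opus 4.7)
Set $n := |V(G)|$, $S := \{v\in V(G) : \deg(v)\le 4\}$, and $L := V(G)\setminus S$. I want to prove $|L|\le 8|S|$, which is equivalent to the stated bound $|S|\ge n/9$. A purely global argument from the Norine--Thomas bound $|E(G)|\le (5n-7)/2$ combined with $2|E(G)|\ge 3|S|+5|L|$ yields only the constant inequality $|S|\ge 4$, so a local structural input from minimality is required.

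The central ingredient I would develop is a \emph{witness lemma}: for every $v\in L$ there is a short certificate, say a vertex or a path of length at most $2$ from $v$, ending at some $s\in S$. The input is that no edge $vu$ incident to $v$ is removable, so for each such edge $G-vu$ fails to be $3$-connected or fails to be bicritical. A loss of $3$-connectivity produces a small vertex cut near the edge $vu$, and hence a low-degree vertex nearby. A loss of bicriticity produces a barrier in $G-vu$, and the classical fact that a vertex lying inside a nontrivial barrier of a matching-covered graph has degree bounded by the size of the barrier lets me extract a low-degree vertex again. Working through this dichotomy for every edge at $v$ should give me a witness $s\in S$ linked to $v$ by a very short path.

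The second step is a counting lemma: no $s\in S$ is the witness of more than $8$ vertices of $L$. Since $\deg(s)\le 4$ there are at most $4$ edges at $s$ and correspondingly a bounded number of short paths through $s$ of the type produced in the witness lemma; the constant $8$ should drop out of a direct case check against the local structure at $s$. Combining the two lemmas gives $|L|\le 8|S|$, and so the theorem follows. The four exceptional minimal bricks (the prism and $W_4,W_6,W_8$) are handled separately and trivially, since every vertex of the prism and of $W_4$ has degree $3$, and the wheels $W_6,W_8$ have all but one vertex of degree $3$.

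The main obstacle is the witness lemma. Distilling a low-degree vertex near $v$ from the single hypothesis that the edge $vu$ is non-removable takes us into the barrier and tight-cut machinery of matching theory, and this is where almost all of the technical work will lie; the edge-count and the final bookkeeping are routine by comparison. A secondary difficulty is keeping the range of the witness small enough that the subsequent counting fits the advertised factor of $8$: if the witness were allowed to lie at distance $3$ rather than $2$, the counting bound would deteriorate quickly and the final ratio $1/9$ would be lost.
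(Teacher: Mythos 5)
Your plan is genuinely different from the paper's, but it is a plan, not a proof: the two lemmas you need — the witness lemma and the counting lemma — are exactly where the theorem lives, and neither is established.

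On the witness lemma: you correctly identify it as the main obstacle, but you give no argument that it is even true. The dichotomy ``$G-vu$ fails $3$-connectivity or fails bicriticity'' is a good starting point, but the conclusion you want (a degree-$\le 4$ vertex within distance $2$ of $v$) is a strong local structural statement about minimal bricks that does not follow in any routine way from the existence of a $2$-cut or a barrier. In particular, a nontrivial barrier in $G-vu$ controls the degree of vertices inside the barrier, but there is no a priori reason that barrier or a small vertex of it lies at bounded distance from $v$. The counting lemma is also shakier than you suggest: $\deg(s)\le 4$ bounds the number of edges at $s$, but it does not bound the number of length-$2$ paths through $s$ (the neighbours of $s$ may all have huge degree), so ``at most $8$ vertices of $L$ have $s$ as their witness'' does not drop out of the degree bound on $s$ alone — you would have to restrict the witness assignment far more tightly than ``some vertex within distance $2$.''

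For contrast, the paper sidesteps local structure entirely. It splits on the average degree of $G$: if $d(G)\le 5-\frac{2}{9}$, the trivial degree-sum count you dismissed already gives $|V_{\le 4}|\ge \frac{1}{9}|V(G)|$; if $d(G)\ge 4+\frac{7}{9}$, it invokes the Norine--Thomas generation theorem to obtain a brick-on-brick sequence of minimal bricks, shows by an edge-density calculation (Lemma~\ref{lotsofquad}) that many conservative-quadratic extensions must occur, and then uses a reordering argument (Lemma~\ref{reorderlem}) together with the key structural Lemma~\ref{quadonquad} (two chained conservative-quadratic extensions sharing a fundament vertex cannot yield a minimal brick) to force most of those new vertices to stay degree~$3$. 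That route trades your hard local analysis for the global machinery of strict extensions, and the actual case-work happens inside Lemma~\ref{quadonquad} (bicriticity and $3$-connectivity of $G''-uu'$), which is a self-contained matching argument rather than a neighbourhood search. If you want to pursue the local witness idea you would need, at a minimum, a precise and provable statement of the witness lemma and a witness assignment rule tight enough that the fan-in at each $s\in S$ is genuinely bounded; as written both halves are conjectural.

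Two small corrections: the prism is not an exception to the Norine--Thomas average-degree bound in the way you imply (the four special graphs are excluded from the $5-7/n$ bound, not from the theorem you are proving), and your final ratio requires $|L|\le 8|S|$ exactly, so any slack lost in either lemma would already destroy the $\frac19$.
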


We hope that the methods developed here, if substantially strengthened, 
will be useful for attacking Norine and Thomas' conjecture.

\section{Brick generation}

For practical purposes, the abstract definition of a brick as a 3-connected and bicritical graph may sometimes be
less useful than  knowing how to obtain a brick from another brick by a small local operation. De Carvalho, Lucchesi and Murty~\cite{CLM06} study such operations, and prove that any brick other than the Petersen graph can be obtained by performing these operations successively, starting with either $K_4$ or the prism. (In particular, every graph in this sequence is a brick.) Norine and Thomas~\cite{NoTh07} show a gene\-ra\-lisation of this result, which they obtained independently.

In particular, every brick has a generating sequence of ever larger bricks.
To be useful in induction proofs about minimal bricks, however, 
it appears necessary that all intermediate graphs are minimal as well, 
which is unfortunately not guaranteed by the results above. 
To mend this situation,
Norine and Thomas~\cite{NoTh06} introduce another family of operations, 
called \emph{strict extensions}, which we shall describe below. Using strict extensions, they find that each minimal brick has a generating sequence consisting only of minimal bricks:

\begin{theorem}[Norine and Thomas~\cite{NoTh06}]\label{generatingminbricks}
Every minimal brick other than the Petersen graph can be 
obtained by strict extensions starting from $K_4$ or the
prism, where all intermediate graphs are minimal bricks. 
\end{theorem}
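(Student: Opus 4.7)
The plan is to prove Theorem \ref{generatingminbricks} by induction on $|V(G)|$, taking $K_4$ and the prism as the base cases. For the induction step, given a minimal brick $G$ different from $K_4$, the prism, and the Petersen graph, I would seek a strictly smaller minimal brick $G'$ together with a strict extension that produces $G$ from $G'$. Applying the induction hypothesis to $G'$ would then finish the proof.

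The starting point is the general brick generation theorem of De Carvalho, Lucchesi and Murty (or the independently obtained version by Norine and Thomas), which guarantees that every brick other than the Petersen graph is obtainable by their three basic operations from $K_4$ or the prism. Inverting this, one gets that $G$ itself can be reduced to some smaller brick $G'$ by a single step. The difficulty, which is exactly what motivates the introduction of strict extensions, is that this $G'$ need not be minimal: removing the edges that were introduced in the last extension step may destroy a removable edge. So my task splits in two: (i) show that at least one reduction of $G$ yields a \emph{minimal} brick, and (ii) verify that the corresponding inverse operation falls within the restricted family of strict extensions.

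For (i) the strategy is to exploit the sparsity of minimal bricks together with the presence of degree-$3$ vertices, guaranteed by De Carvalho, Lucchesi and Murty's result cited above. Fix a degree-$3$ vertex $v$; its local neighborhood admits only finitely many configurations, and for each one there is a canonical reduction (contract/delete around $v$) that undoes a specific kind of extension. I would argue that at least one of these reductions produces a brick $G'$ (rather than merely a matching-covered graph): the 3-connectivity and bicritical conditions should follow from the corresponding properties of $G$, using that $G$ has plenty of perfect matchings avoiding any prescribed vertex pair. Having such a reduction in hand, minimality of $G'$ is established by contradiction: if some edge $e'$ of $G'$ were removable, the correspondence between edges of $G$ and $G'$ induced by the reduction would yield a removable edge in $G$, contradicting the minimality of $G$. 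Strict extensions are then defined so that this edge correspondence behaves well enough for this implication to go through.

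The main obstacle is precisely this last implication: pulling a removable edge in $G'$ back to a removable edge in $G$ across the reduction step. A naive extension might introduce new edges whose removability in $G$ is not controlled by removability in $G'$, which is why the restriction to \emph{strict} extensions is forced upon us. Concretely, I expect the definition of a strict extension to insist on a local matching condition guaranteeing that, for every edge $e$ of $G$ not newly introduced, some perfect matching witnessing the ``bricky'' role of $e$ in $G$ descends to a perfect matching of $G'$ witnessing the analogous role for $e'$, and vice versa. Verifying this correspondence, and checking that no minimal brick except $K_4$, the prism, and Petersen can resist all possible strict reductions, is where the bulk of the technical work would lie; the Petersen exception arising naturally as the unique minimal brick whose high symmetry blocks every would-be strict reduction.
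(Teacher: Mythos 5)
This theorem is not proved in the paper at all: it is quoted verbatim from Norine and Thomas~\cite{NoTh06}, and the authors use it as a black box. So there is no in-paper proof to compare your attempt against; the relevant benchmark is Norine and Thomas's original (long and technical) argument, and measured against that your proposal is a research plan rather than a proof. Its overall shape is reasonable and does resemble the real strategy --- induct on $|V(G)|$, use the general brick-generation theorem to find a reduction of $G$ to a smaller brick, and then argue that the reduction can be chosen so that the smaller brick is minimal and the inverse operation is a strict extension. But every step that carries actual content is deferred: you do not show that some reduction around a degree-$3$ vertex yields a brick, you do not show that it yields a \emph{minimal} brick, you do not verify that the inverse operation is one of the five strict extensions, and you do not explain why the Petersen graph is the unique exception. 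You explicitly flag these as ``where the bulk of the technical work would lie,'' which is an accurate self-assessment: that work \emph{is} the theorem.

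Two more concrete problems. First, you speculate that strict extensions are ``defined so that'' a certain edge correspondence behaves well, via ``a local matching condition.'' That is not the definition: the paper spells out strict extensions as five explicit combinatorial operations (strict linear, bilinear, pseudolinear, quasiquadratic, quasiquartic), built from bisplits and the addition of specific vertices and edges. A proof of the theorem has to work with that concrete definition, not with a wished-for axiomatization. Second, the pivotal implication you rely on --- that a removable edge of the reduced brick $G'$ would pull back to a removable edge of $G$, so that minimality of $G$ forces minimality of $G'$ --- is exactly the delicate point, and you give no argument for it. The paper itself warns that a strict extension of a minimal brick need not be minimal, which shows that removability does not transfer naively across these operations in the forward direction; the backward transfer you need is no more automatic and is where Norine and Thomas expend most of their effort. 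As it stands, the proposal identifies the right obstacles but resolves none of them.
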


Notice that although a strict extension of a brick is a brick, a strict extension of a minimal brick need not be a minimal brick~\cite{NoTh06}.

\medskip

Let us now formally define strict extensions, following Norine and Thomas~\cite{NoTh06}.
There are five types of strict extensions: Strict linear, bilinear, pseudolinear,
quasiquadratic and quasiquartic extensions. The first three of these are based
on an even simpler operation, the bisplitting of a vertex.

For this, consider a graph $H$ and one of its vertices $v$ of degree at least~$4$.
Partition the neighbourhood of $v$ into two sets $N_1$ and $N_2$ such that
each contains at least two vertices. We now replace $v$ by two new independent vertices, $v_1$
and $v_2$, where $v_1$ is incident with the vertices in $N_1$ and $v_2$
with the ones in $N_2$. Finally, we add a third new vertex $v_0$ that
is adjacent to precisely $v_1$ and $v_2$. We say that any such graph $H'$
is obtained from $H$ by \emph{bisplitting $v$}. The vertex $v_0$ 
is the \emph{inner vertex} of the bisplit, while $v_1$ and $v_2$ 
are the \emph{outer vertices}. Any time we perform a bisplit at 
a vertex $v$ we will tacitly assume $v$ to have degree at least~$4$.

\medskip
We will now define turn by turn the strict extensions. At the same 
time we will specify a small set of vertices, the \emph{fundament} of the strict extension. 
One should think of the fundament as a minimal set of vertices
that needs to be present, should we want to perform the extension 
in some other, usually smaller, graph.

Let $v$ be a  vertex of a graph $G$. 
We say that $G'$ is a 
\emph{strict
 linear extension of $G$}
if $G'$ is obtained by one of the three following operations. (See Figure~\ref{strlinext} for an illustration.)
\begin{enumerate}
\item We perform a bisplit at $v$, denote by $v_0$ the inner vertex, and by $v_1$ and~$v_2$ the outer vertices
of the bisplit. Choose a vertex $u_0\in V(G)$ that is non-adjacent to $v$.
 Add the edge $u_0v_0$.
\item We perform bisplits at $v$ and at a second non-adjacent vertex $u$, obtaining outer vertices $v_1$ and $v_2$ and inner vertex $v_0$ from the first bisplit and outer vertices $u_1$ and $u_2$ and inner vertex $u_0$ from the second.   Add the edge $u_0v_0$.
\item We bisplit $v$, obtaining the inner vertex $u_0$, and outer vertices $u_1$ and $u_2$. We bisplit $u_1$, obtaining an inner vertex $v_0$ and outer vertices $v_1$ and $v_2$, where 
$v_1$ is adjacent to  $u_0$.  Add the edge $u_0v_0$.
\end{enumerate}

The \emph{fundament} of the extension depends on the subtype: 
For 1.\ the fundament is comprised of $u_0,v$ plus any choice
among the vertices of $G$
 of 
two neighbours of $v_1$ and of two neighbours of $v_2$; for 2.\
it will be $u,v$ together with any two neighbours for each of $u_1,u_2,v_1,v_2$
that lie in $G$;
and for 3.\ we choose $v$, one neighbour of $v_1$ and two of each of $u_2$ and $v_2$,
all of them vertices of $G$.

\begin{figure}[ht]
\centering
\includegraphics[scale=1]{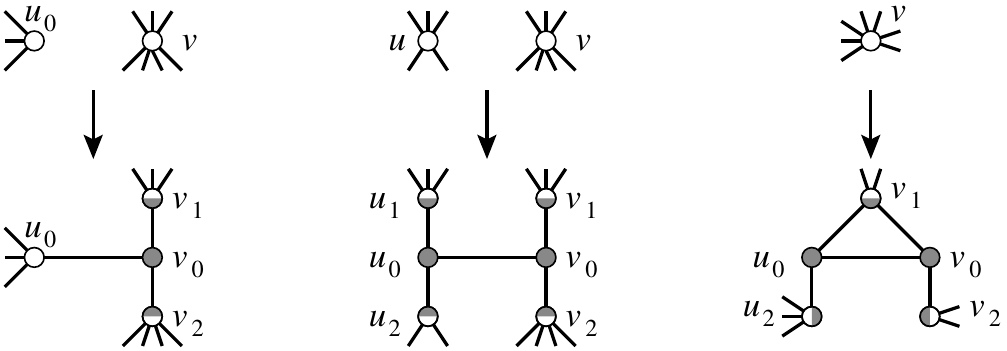}
\caption{Strict linear extension}\label{strlinext}
\end{figure}

Next, assume $u,v,w$ to be three vertices of $G$, so that $w$ is a neighbour
of $u$ but not of $v$. Bisplit $u$, and denote by $u_2$ the new outer 
vertex that is adjacent to $w$, by $u_1$ the other outer vertex 
and by $u_0$ the new inner vertex. 
Subdivide the edge $u_2w$ twice, so that it becomes a  
a path $u_2abw$, where $a$ and $b$ are new vertices. 
Let $G'$ be the graph obtained by adding the edges $bu_0$ and $av$; 
see Figure~\ref{bilinearext}. We say that $G'$ is a \emph{bilinear extension of $G$}. 
Its \emph{fundament} consists of $u,v,w$ together with one neighbour 
of $u_2$, neither $a$ nor $u_0$, and two neighbours of $u_1$, none equal to $u_0$.

\begin{figure}[ht]
\centering
\includegraphics[scale=1]{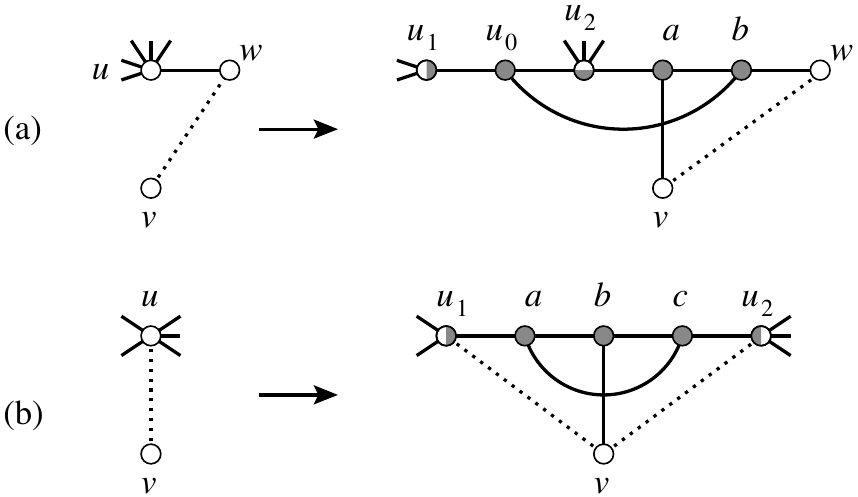}
\caption{(a) Bilinear extension (b) Pseudolinear extension}\label{bilinearext}\label{pseudolinext}
\end{figure}

A graph $G'$ is called a \emph{pseudolinear extension of $G$} if it may be obtained 
from $G$ in the following way. Choose a vertex $u$ of $G$ of degree at least~$4$, 
and a non-neighbour $v$ of $u$. Partition the neighbours of $u$ into 
two sets $N_1$ and $N_2$ each of size at least two. Replace the vertex $u$ 
by two new ones, $u_1$ and $u_2$, so that $u_1$ is adjacent to every vertex in $N_1$
and $u_2$ to every one in $N_2$. Add three new vertices $a,b,c$ and a path $u_1abcu_2$,
and let the graph resulting from adding the edges $ac$ and 
$bv$ be $G'$; see Figure~\ref{pseudolinext}.
We define the \emph{fundament} as $u,v$ plus two neighbours of each of $u_1$ and $u_2$, 
all chosen among $V(G)$.

The penultimate extension is the \emph{quasiquadratic extension}, shown in 
Figure~\ref{quadraticext}. Let $u$ and $v$
be two distinct vertices of $G$, and let $x$ and $y$ be not necessarily distinct vertices
so that $x\neq u$, $y\neq v$ and $\{u,v\}\neq \{x,y\}$. If $u$ and $v$ are adjacent, delete
the edge between them. Add two adjacent new vertices $u'$ and $v'$ and join $u'$ by an edge to $u$ and $x$, 
and make $v'$ adjacent to $v$ and $y$. 
The resulting graph $G'$ is a quasiquadratic extension of $G$.

Norine and Thomas distinguish those quasiquadratic extensions in which 
the edge $uv$ was present in $G$, calling these extensions
 \emph{quadratic}. 
As we will mostly be concerned
with non-quadratic quasiquadratic extensions, let us call these extensions \emph{conservative-quadratic}.
 Thus, in a conservative-quadratic extension the vertices
$u$ and $v$ are not adjacent in $G$, and, in particular, $G$ is an induced subgraph of $G'$.
Let us remark rightaway that, as a conservative-quadratic extension is not quadratic its name is ill-chosen. 
To be more correct, we should call such an extension conservative-quasiquadratic. But life is
far too short for such a long name. 

The \emph{fundament} of the quasiquadratic extension is simply $\{u,v,x,y\}$. 
For later use, let us call $\{u,v\}$ 
the  \emph{upper fundament} of the extension.

\begin{figure}[ht]
\centering
\includegraphics[scale=1]{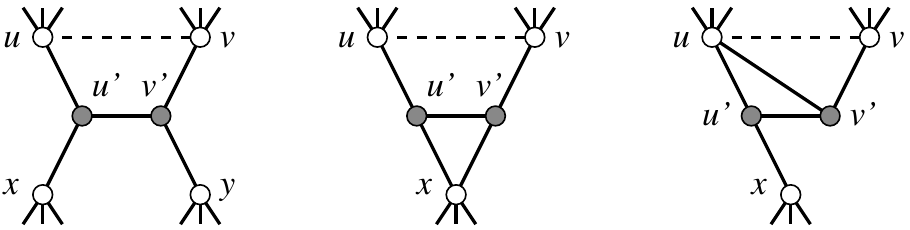}
\caption{(Quasi-)quadratic extension with different allowed identifications}\label{quadraticext}
\end{figure}

Finally, consider distinct vertices $u,v$ and distinct vertices $x,y$ so that $u\neq y$, $v\neq x$ and 
$\{u,v\}\neq\{x,y\}$. If present, delete the edges $uv$ and $xy$. We add four new vertices $u',v',x',y'$
and edges between them so that $u'v'y'x'u'$ is  a $4$-cycle. The graph obtained by adding the edges 
$uu'$, $vv'$, $xx'$ and $yy'$ is a \emph{quasiquartic extension of $G$}. 
Its \emph{fundament} consists of $u,v,x,y$.

\comment{In Minimal Bricks sind die Identifizierungen $u=y$ und $v=x$ erlaubt; das muss ein Fehler sein.
In Generating Bricks sind sie auch explizit ausgeschlossen.}

\comment{ARgh! In Norine \& Thomas, Minimal Bricks, hei\ss t eine quasiquadratic ext quadratic, wenn $u$ und $v$
{\bf nicht} benachbart -- in Norine \& Thomas, Generating bricks, wird f\"ur eine quadratic ext vorausgesetzt, 
dass $u$ und $v$ benachbart sind. F\"ur quartic schreiben sie in Minimal Bricks, dass $uv$ und $xy$ Kanten von 
$G$ sein sollen, so steht's auch in Generating bricks. Ich bezieh mich hier auf Preprint-Versionen -- checken, wie's
im publizierten Artikel steht!}

\begin{figure}[ht]
\centering
\includegraphics[scale=1]{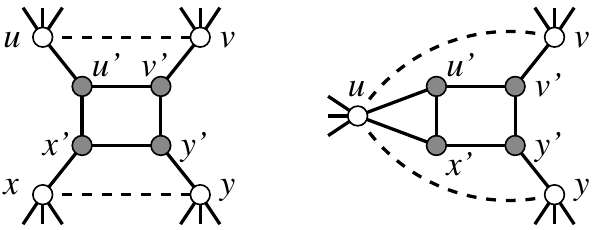}
\caption{(Quasi-)quartic extension with different allowed identifications}\label{quarticext}
\end{figure}

Now, an extension is called \emph{strict} if it is any of the following:  quasiquadratic, quasiquartic, bilinear, pseudolinear, and strict linear.
\comment{Bei einer strict linear ext is einfaches Hinzufügen einer Kante nicht erlaubt: eine linear aber nicht strict linear
ext ist genau das Hinzufügen einer Kante.}
We write $G\to G'$ if $G$ is a brick and $G'$ is obtained from $G$ by a strict extension.


\medskip

Let $F$ be the fundament of the strict extension $G\to G'$.
We observe two trivial properties: 
\begin{equation}\label{funddeg}
\begin{minipage}[c]{0.8\textwidth}\em
Any vertex outside $F$
has the same degree in $G$ as in $G'$.
\end{minipage}\ignorespacesafterend 
\end{equation} 
\begin{equation}\label{fundsize}
\begin{minipage}[c]{0.8\textwidth}\em
We have ${|F|}\leq 3\cdot (|V(G')|-|V(G)|)$. 
\end{minipage}\ignorespacesafterend 
\end{equation} 
\comment{
\begin{tabular}{lccccccc}
            & lin 1 & lin 2 & lin 3 & bilin & pseudo & quad & quart \\ 
\# fund     &  6    &   10  &   6   &  6    &    6   &  4   &   4   \\ 
\# new vxs  &  2    &   4   &   4   &  4    &    4   &  2   &   4   \\
ratio       & $3$   & $5/2$ & $3/2$ & $3/2$ &  $3/2$ & $2$  &  $1$
\end{tabular}
}
We note that the ratio $3$ is attained for strict linear
extensions of the first type: There the fundament consists of $u,v$ 
plus four neighbours of $v$, while $G'$ has only two vertices more than $G$.

It is easy to see that a strict extension $G'$ of a brick $G$ is $3$-connected. Also, it is not difficult to find a perfect matching of $G'-x-y$ for any pair of vertices $x,y\in V(G')$, with exception of the pair $u_0,v_0$ if $G\to G'$ is a strict linear extension, and the pair $u_0b$, or $ac$, if $G\to G'$ is a bi- or pseudolinear extension, respectively. These particular cases can be reduced to the exercise of finding a perfect matching in the graph obtained from $G$ by bisplitting a vertex, deleting the new inner vertex and another vertex distinct from the new outer vertices. 
 Using Tutte's theorem, and the fact that $G$ is brick, this is not hard to solve.

This leads to the following lemma, which has also been observed by 
Norine and Thomas~\cite{NoTh06}: 
\begin{lemma}\label{bricktobrick}
Any strict extension of a brick is a brick.
\end{lemma}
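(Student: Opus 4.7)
The plan is to verify both defining properties of a brick -- $3$-connectivity and bicriticality -- for the output $G'$ of each of the five types of strict extension. In all five cases the argument follows the same template sketched by the authors in the paragraph preceding the statement; most of the work is routine case-checking, and I will concentrate here on identifying the genuinely nontrivial step.

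For $3$-connectivity, a direct inspection of Figures~\ref{strlinext}--\ref{quarticext} shows that every vertex newly added to $G$ has degree at least~$3$ in $G'$. Contracting the small gadget each extension introduces (the inner vertex and outer pair of a bisplit, or the short path, respectively the $4$-cycle, in the quasiquadratic and quasiquartic cases) recovers $G$ up to at most one extra edge, so any hypothetical $2$-separator of $G'$ would descend to a vertex cut of $G$ on at most two vertices, contradicting the $3$-connectivity of~$G$. The handful of border configurations in which the would-be separator lies entirely among the new vertices are eliminated directly from the local adjacency picture.

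For bicriticality, fix two distinct $x,y\in V(G')$ and construct a perfect matching $M$ of $G'-x-y$. For all but the handful of exceptional pairs $\{x,y\}$ flagged in the excerpt, there is an obvious matching on the small set $V(G')\sm V(G)$, using some of the edges introduced by the extension together with possibly one or two edges $v_iz$ with $z\in N_i$; what remains is a copy of $G$ with two vertices removed, and bicriticality of $G$ supplies a perfect matching of it, which appended to the gadget matching yields~$M$. Carrying this out is a patient but mechanical enumeration of how the at most four new vertices can be paired up against the choices of $x,y$.

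The crux is the short list of exceptional pairs: $\{u_0,v_0\}$ for strict linear extensions, $\{u_0,b\}$ for bilinear ones, and $\{a,c\}$ for pseudolinear ones. In each of them, matching the rest of the gadget forces $v_1$ and $v_2$ to be paired off into $N_1$ and $N_2$, and the problem boils down to the following common statement: \emph{Let $v\in V(G)$ have degree at least $4$, let $H$ be obtained from $G$ by bisplitting $v$ into $v_0,v_1,v_2$, and let $w\in V(G)\sm\{v\}$; then $H-v_0-w$ has a perfect matching.} This is the main obstacle, and I would settle it via Tutte's theorem: for every $S\subseteq V(H)\sm\{v_0,w\}$ one must verify $q(H-v_0-w-S)\le|S|$, where $q(\cdot)$ counts odd components. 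When $v_1$ and $v_2$ lie in the same component of $H-v_0-w-S$ (in particular whenever at least one of them lies in $S$), identifying them back to~$v$ converts the graph into $G-w-S'$ with $|S'|\le|S|+1$ and the parity of odd components controlled, so the Tutte condition is inherited from the perfect matching of $G-v-w$ provided by the bicriticality of~$G$. When $v_1$ and $v_2$ lie in distinct components, the set $S\cup\{w\}$ together with the absent $v_0$ would separate $N_1$ from $N_2$ in~$G$; combined with the $3$-connectivity of~$G$ this bounds the number of extra odd components that the bisplit can create and keeps us within the required count. The $3$-connectivity plus bicriticality of~$G$ are exactly the inputs needed, and apart from this application of Tutte the proof is bookkeeping.
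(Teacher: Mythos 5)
Your proposal follows essentially the same route the paper sketches in the paragraph preceding the lemma: check $3$-connectivity directly, build a perfect matching of $G'-x-y$ by matching the new gadget and falling back on the bicriticality of $G$, isolate the exceptional pairs $\{u_0,v_0\}$, $\{u_0,b\}$, $\{a,c\}$, reduce those to a perfect matching of the bisplit graph $H-v_0-w$, and close that off with Tutte's theorem plus $3$-connectivity and bicriticality of $G$. One small imprecision: for the bilinear pair $\{u_0,b\}$ the residual gadget is $\{u_1,u_2,a\}$ and matching $a$ to $u_2$ leaves $G-u-n_1$ for some $n_1\in N_1$, which is handled directly by bicriticality rather than by the bisplit exercise, but this does not affect the correctness of the overall argument, and the paper itself leaves the reduction at the same level of sketchiness.
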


We close this section with an example. In Figure~\ref{tripleladderbrick}
we build up a triple ladder by repeatedly alternating between 
quasiquartic and quasiquadratic extensions, starting from a prism. As by Lemma~\ref{bricktobrick}, 
strict extensions take a brick to a brick, we deduce that the triple ladder
is a brick. To see that it is 
a minimal brick, note that the deletion of any edge 
 results in a graph that fails to be $3$-connected.
\begin{figure}[ht]
\centering
\includegraphics[scale=0.7]{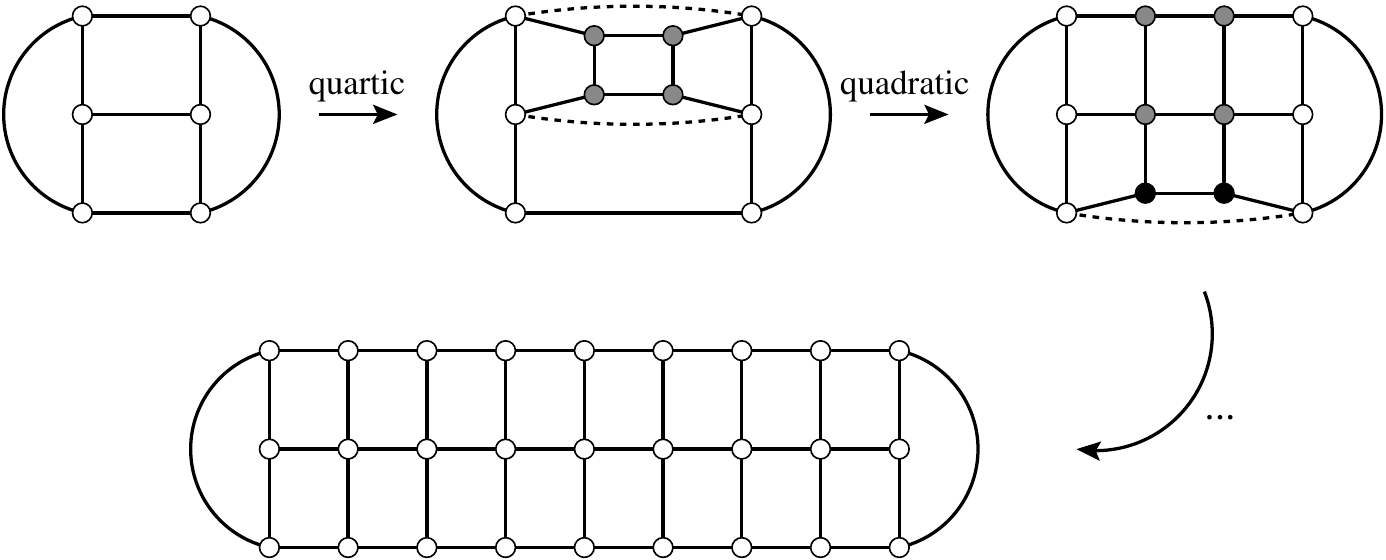}
\caption{A minimal brick}\label{tripleladderbrick}
\end{figure}

\section{Brick on brick}

We will call a sequence 
$G_0{\to}G_1{\to}\ldots{\to}G_k$
a \emph{brick-on-brick sequence} if all the $G_0,\ldots,G_k$ are bricks
(not necessarily minimal) and if all the $G_{i-1}{\to}G_i$ are strict
extensions. Thus, the theorem of Norine and Thomas states
that every minimal brick $G$ has such a brick-on-brick sequence
that starts with $K_4$ or the prism and ends with $G$, 
and in which all intermediate bricks are minimal---unless $G$ is the 
Petersen graph.

We formulate a simple lemma that allows us to
reorder a brick-on-brick sequence.
\begin{lemma}\label{reorderlem}
Let $A\to B\to C$ be a 
brick-on-brick sequence, so that $A\to B$ is con\-ser\-va\-tive-quadratic
with new vertices $p,q$ and
so that $p,q$ do not lie in the fundament of $B\to C$.
Then there exist a brick $B'$  
 so that 
$A\to B'\to C$
is a brick-on-brick sequence and $B'\to C$ is conservative-quadratic
with new vertices $p,q$.
\end{lemma}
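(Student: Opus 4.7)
The plan is to construct $B'$ by performing on $A$ the very same strict extension that takes $B$ to $C$. Writing $u,v,x,y$ for the fundament of $A\to B$ and $F$ for the fundament of $B\to C$, the hypothesis $p,q\notin F$ together with $V(B)=V(A)\cup\{p,q\}$ gives $F\subseteq V(A)$. Since $A\to B$ is conservative-quadratic, $A$ is an induced subgraph of $B$, so the adjacencies among vertices of $F$ agree in $A$ and in $B$; in particular every adjacency or non-adjacency condition required by the definition of the extension $B\to C$ is already satisfied in $A$. The very point of the fundament is to list the vertices whose presence (with the correct local structure) is needed to carry out the extension, so the neighbourhood-size and degree conditions (e.g.\ two neighbours on each side of any bisplit) hold for the $F$-vertices inside $A$. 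Hence the extension lifts to a strict extension $A\to B'$, and Lemma~\ref{bricktobrick} certifies that $B'$ is a brick.

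Next, I would verify that $C$ can be recovered from $B'$ by adding back $p,q$ together with a suitable set of five edges. Let $N=V(C)\setminus V(B)$ be the vertices introduced by $B\to C$; by construction one has $V(B')=V(A)\cup N$ and hence $V(C)=V(B')\cup\{p,q\}$. All edges of $C$ incident to $N$ already lie in $B'$, so the edges of $C$ missing in $B'$ are precisely those incident to $p$ or $q$, namely $pq$ together with $pu^*, px^*, qv^*, qy^*$, where $u^*,v^*,x^*,y^*$ denote the vertices of $B'$ that play the role of $u,v,x,y$ after the extension (the vertex itself, unless it was bisplit in $B\to C$, in which case the appropriate outer vertex on the side to which $p$ or $q$ was assigned). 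Adding $p$ and $q$ with these five edges to $B'$ therefore produces exactly $C$, exhibiting $B'\to C$ as a quasiquadratic extension with fundament $\{u^*,v^*,x^*,y^*\}$ and new vertices $p,q$.

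The one delicate point, and the main obstacle, is to confirm that this last extension is indeed \emph{conservative}-quadratic, i.e.\ that $u^*$ and $v^*$ are non-adjacent in $B'$. Because $u$ and $v$ are non-adjacent in $A$, it suffices to check that the step $A\to B'$ introduces no edge between $u^*$ and $v^*$. Inspection of the five strict-extension types shows that every newly created edge has at least one endpoint among the newly created vertices, and that the outer vertices of a bisplit inherit only the edges to their assigned side of the neighbourhood partition (which, for a bisplit of $u$, cannot contain $v$). The remaining quasiquadratic requirements $u^*\ne y^*$, $v^*\ne x^*$ and $\{u^*,v^*\}\ne\{x^*,y^*\}$ follow similarly from the corresponding conditions on $u,v,x,y$ in $A$. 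I expect the bulk of any detailed write-up to lie in this final case analysis on the five extension types.
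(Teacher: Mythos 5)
Your proposal is correct and follows essentially the same route as the paper: define $B'$ so that $A\to B'$ repeats the extension $B\to C$ (equivalently $B'=C-\{p,q\}$), use $p,q\notin F$ to see this is a legal strict extension (in particular any bisplit can still be carried out in $A$), invoke Lemma~\ref{bricktobrick} to get that $B'$ is a brick, and then check that $B'\to C$ is conservative-quadratic. The only small divergence is at the last step: you argue directly that $u^*$ and $v^*$ remain non-adjacent in $B'$, whereas the paper avoids this case analysis by noting that $A\to B'$ and $B\to C$ add the same number of edges, so $B'\to C$ adds the same number as $A\to B$ and is therefore conservative rather than quadratic.
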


\begin{proof}
Since $A\to B$ is con\-ser\-va\-tive-quadratic, we have that $B-\{p,q\}=A$. It is easy to verify that thus $A\to C-\{p,q\}$ is a strict extension (of the same type as $B\to C$). 
For this, it is important to note that by assumption,  $p$ and $q$ are not in the fundament of $B\to C$. In particular, any bisplittings of $B\to C$ can also be performed in $A$
at vertices of degree~$\geq 4$. Using Lemma~\ref{bricktobrick}, we see that $B':=C-\{p,q\}$ is a brick.

It remains to show that $B'\to C$ is a conservative-quadratic extension. This is easy to check if
 none of the vertices of the fundament $F$ of $A\to B$ has suffered a bisplit during the operation
$A\to B'$. 
 So assume
there is a vertex $s\in F$ which is bisplit in $A\to B'$, and say $s$ is adjacent to $p$ in $B$. Then, however, $s$ is also bisplit
in $B\to C$, and in $C$, one of the new outer vertices, say $s_1$, is adjacent to~$p$. 
So  $B'\to C$ is a quasiquadratic extension. Note that the number of edges gained in $A\to B'$ and in $B\to C$ is the same, and so, with $A\to B$, also $B'\to C$ is 
 conservative-quadratic.
\end{proof}

Let us now examine how the edge density changes in  a 
brick-on-brick sequence. 
Suppose $G=(V,E)$ is a minimal brick other than the Petersen graph,
and let $G_0\to\ldots\to G_k$ be a brick-on-brick sequence for $G$ 
as given by Theorem~\ref{generatingminbricks}, that is, $G=G_k$ and $G_0$
is either the $K_4$ or the prism.
For a set of indices $I\subseteq\{1,\ldots,k\}$ we define 
$\nu(I)$
 to be the total number of vertices added in extensions corresponding 
to $I$: 
\[
\nu(I):=\sum_{i\in I}(|V(G_{i})|-|V(G_{i-1})|).
\]

 Similarly, 
we define 
\[
\epsilon(I):=\sum_{i\in I}(|E(G_{i})|-|E(G_{i-1})|).
\]

Now, let $I_1$ be the set of indices $i\in\{1,\ldots,k\}$ for which 
$G_{i-1}\to G_i$ is a strict linear, bilinear or pseudolinear extension, and  set 
$\nu_1=\nu(I_1)$ and $\epsilon_1=\epsilon(I_1)$. We define analogously
$I_2$, $\nu_2$ and $\epsilon_2$ (resp.\ $I^c_2$, $\nu^c_2$ and $\epsilon^c_2$)
for quasiquadratic (resp.\ conservative-quadratic) extensions and 
$I_3$, $\nu_3$ and $\epsilon_3$
for quasiquartic extensions. 

Finally, let $\nu_0:=|V(G_{0})|$ and $\epsilon_0:=|E(G_{0})|$. As $G_0$ is either 
$K_4$ or the prism it follows that $(\nu_0,\epsilon_0)\in\{(4,6),(6,9)\}$.
Moreover, we clearly have that 
\begin{equation}\label{summe}
|V(G)|=\nu_0+\nu_1+\nu_2+\nu_3\text{ and }|E(G)|=\epsilon_0+\epsilon_1+\epsilon_2+\epsilon_3.
\end{equation}
It is easy to calculate that 
\begin{equation}\label{werte}
\text{
$\epsilon_0=\frac{3}{2}\nu_0$,
 $\epsilon_1\leq \frac{3}{2}\nu_1$, $(\epsilon_2-\epsilon^c_2)=\frac{4}{2}(\nu_2-\nu^c_2)$, $\epsilon^c_2=\frac{5}{2}\nu^c_2$ and
 $\epsilon_3\leq\frac{8}{4}\nu_3$. 
}
\end{equation}

From~\eqref{werte}, we see that the `edge density gain' is largest when performing 
conservative-quadratic extensions. 
In fact, the greater the average degree of a minimal brick, the more 
conservative-quadratic extensions must have been used in any of its brick-on-brick sequences:

\begin{lemma}\label{lotsofquad}
Let $\delta>0$, and let $G$ be a minimal brick 
with average degree $d(G)\geq 4+\delta$.
For any brick-on-brick sequence $G_0\to\ldots\to G_k$ 
with $G=G_k$ and $G_0\in\{K_4,\text{Prism}\}$ it holds that 
$\nu^c_2\geq\delta|V(G)|$.
\end{lemma}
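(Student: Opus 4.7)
The plan is to combine the two identities in \eqref{summe} with the individual edge-count bounds in \eqref{werte}, and then confront the result with the hypothesis $2|E(G)|\geq(4+\delta)|V(G)|$ on the average degree. The only type of extension that contributes strictly more than $4$ edges per added vertex is the conservative-quadratic one (which contributes $5/2$ per vertex, i.e.\ ratio $5$), so intuitively any ``excess'' of the average degree over $4$ must be paid for by conservative-quadratic extensions; the calculation should make this precise.

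Concretely, I would rewrite each of the bounds in \eqref{werte} in the form $2(\epsilon_i)\leq 4\nu_i+(\text{slack})_i$: the base contributes $3\nu_0=4\nu_0-\nu_0$, the linear/bilinear/pseudolinear extensions contribute $2\epsilon_1\leq 3\nu_1=4\nu_1-\nu_1$, the quasiquartic extensions contribute $2\epsilon_3\leq 4\nu_3$, and the quasiquadratic block contributes $2(\epsilon_2-\epsilon_2^c)+2\epsilon_2^c=4(\nu_2-\nu_2^c)+5\nu_2^c=4\nu_2+\nu_2^c$. Summing these and using \eqref{summe} gives
\[
2|E(G)|\;\leq\;4|V(G)|-\nu_0-\nu_1+\nu_2^c.
\]

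Now the assumption $d(G)\geq 4+\delta$ says $2|E(G)|\geq(4+\delta)|V(G)|$, and combining this with the displayed inequality yields
\[
\delta|V(G)|\;\leq\;\nu_2^c-\nu_0-\nu_1\;\leq\;\nu_2^c,
\]
which is exactly the claim.

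I do not expect any real obstacle here: the lemma is essentially a bookkeeping consequence of the per-extension edge-to-vertex ratios that have already been tabulated in \eqref{werte}, together with the trivial observation $\nu_0,\nu_1\geq 0$. The only small point worth double-checking is that the inequalities in \eqref{werte} (in particular for strict linear and quasiquartic extensions) are indeed valid uniformly across all sub-types listed in Section~2; this is immediate from the explicit description of each extension.
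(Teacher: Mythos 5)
Your proposal is correct and is essentially the paper's own proof: both combine \eqref{summe} with the per-extension edge bounds in \eqref{werte} and compare with $2|E(G)|\geq(4+\delta)|V(G)|$; the only cosmetic difference is that you carry the slack terms $-\nu_0-\nu_1$ explicitly before discarding them, whereas the paper bounds $\tfrac32\nu_0+\tfrac32\nu_1$ by $2\nu_0+2\nu_1$ one line earlier.
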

\begin{proof}
Let $G=(V,E)$. Using~\eqref{summe} and~\eqref{werte}, we find that
\begin{align*}
\frac{4+\delta}{2}&\leq \frac{|E|}{|V|}\\
& =\frac{1}{|V|}\left(\epsilon_0+\epsilon_1+\epsilon_2+\epsilon_3\right)
\\ &\leq \frac{1}{|V|}\left(\frac{3}{2}\nu_0+\frac{3}{2}\nu_1+2(\nu_2-\nu^c_2)
+\frac{5}{2}\nu^c_2+2\nu_3\right)\\
&\leq \frac{1}{|V|}\left(2|V|+\frac{1}{2}\nu^c_2\right),
\end{align*}
and consequently, $\nu^c_2\geq \delta |V|$.
\end{proof}

We postpone the proof of the following lemma to the next section. 
\begin{lemma}\label{quadonquad}
Let 
$G$ be a brick, and let $G''$ be  a conservative-quadratic extension 
 of a conservative-quadratic extension $G'$ of $G$.
Let $u'$ and $v'$ be the new vertices of $G'$. If one of $u'$, $v'$ is used for the fundament of $G'\to G''$ then $G''$ is not a minimal brick.
\end{lemma}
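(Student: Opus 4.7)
I fix notation: let the first extension add $u',v'$ with edges $u'v', u'u, u'x, v'v, v'y$ (upper fundament $\{u,v\}$ non-adjacent in $G$), and the second add $p,q$ with edges $pq, pa, pc, qb, qd$ (upper fundament $\{a,b\}$ non-adjacent in $G'$). The symmetries of the construction---swap $u'\leftrightarrow v'$ together with $u\leftrightarrow v$ and $x\leftrightarrow y$; swap $a\leftrightarrow b$ together with $c\leftrightarrow d$ and $p\leftrightarrow q$---reduce the hypothesis to the two essentially identical cases $u'=a$ and $u'=c$. In either case $u'$ gains in the second extension exactly one new neighbour $p$, so in $G''$ the four new vertices induce the path $v'\,u'\,p\,q$, the vertex $u'$ is the unique gadget vertex of degree $4$ (with $N_{G''}(u')=\{v',u,x,p\}$), and $v',p,q$ all have degree $3$. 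Write $w\in\{a,c\}\setminus\{u'\}$ for $p$'s $V(G)$-neighbour.

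I plan to prove that $G''-e$ is a brick for $e=u'u$; this contradicts minimality of $G''$. Both endpoints of $e$ have degree $\geq 4$ in $G''$ ($u'$ by inspection, $u$ because $\deg_G(u)\geq 3$ and $u$ acquires $u'$ as a neighbour in the first extension), so $G''-e$ has minimum degree $\geq 3$. For 3-connectivity, if $\{s,t\}$ were a 2-cut of $G''-e$, then since $G''$ is 3-connected the edge $e$ would have to be a bridge in $G''-\{s,t\}$; I would rule this out by exhibiting an alternative $u$--$u'$ path. The vertex $u$ retains a neighbour in $G-\{s,t\}$ (since $\deg_G(u)\geq 3$), while $u'$ reaches $V(G)\setminus\{s,t\}$ through one of the surviving external attachments $u'x,\,v'v,\,v'y,\,pw,\,qb,\,qd$, and the 3-connectivity of $G$ links the two ends through $G-\{s,t\}$.

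The heart of the argument is bicriticality: for every pair of distinct $s,t\in V(G'')$ I must exhibit a perfect matching of $(G''-e)-\{s,t\}$. My plan is to pick a matching inside the gadget and complete it using bicriticality of $G$. The gadget affords several candidate matchings: the internal $\{u'v',pq\}$; the mixed $\{u'p\}$ combined with one of $\{v'v,v'y\}$ and one of $\{qb,qd\}$; and the partial matchings $\{pq\}$ or $\{u'v'\}$ paired with external absorption of the other two gadget vertices. In each configuration one selects the matching covering exactly the gadget vertices outside $\{s,t\}$ and absorbing an even-sized $X\subseteq V(G)\setminus\{s,t\}$; bicriticality of $G$ then produces a perfect matching of $V(G)\setminus(X\cup(\{s,t\}\cap V(G)))$. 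The main obstacle is enumerating the degenerate coincidences among the seven distinguished external vertices $u,v,x,y,w,b,d$ (e.g.\ $w=v$, $b=y$) and the degeneracies where $c$ or $d$ equals $v'$ (so that $v'$ itself acquires degree $4$); in each such case some default gadget matching fails, and one falls back on another candidate chosen to avoid the problematic coincidence. The 3-connectivity step is routine; the matching bookkeeping carries the main technical weight.
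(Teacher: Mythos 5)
Your high-level strategy matches the paper's: both assume $G''$ is minimal, aim to show that $G''-uu'$ is bicritical and $3$-connected (hence a brick), and derive a contradiction. The bicriticality part is essentially what the paper does, and your plan of choosing a gadget matching and completing it via bicriticality of $G$ is the right idea; you leave the casework as a sketch and acknowledge it is the ``main technical weight,'' so that portion is not a different route, just unfinished.

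The genuine gap is in the $3$-connectivity step, which you describe as ``routine.'' It is not: there are admissible identifications under which $G''-uu'$ is \emph{actually not} $3$-connected, so no alternative $u$--$u'$ path exists and your planned argument cannot go through. For example (in the paper's notation, where $\{u',r,s,t\}$ is the fundament of $G'\to G''$ with $r'$ adjacent to $u',r$ and $s'$ adjacent to $s,t$), take $r=v'$, $s=y$, $t=u'$ and $v=x$; then $\{x,y\}$ is a $2$-cut of $G''-uu'$ separating $\{u',v',r',s'\}$ from the rest. The paper's proof of $(3)$zsh does not exhibit a path; it analyses the structure forced by a hypothetical $2$-cut and shows that in each such configuration one may delete a \emph{different} edge (e.g.\ $xu'$ or $u'v'$) and recognise the result as a quadratic extension of $G'$, hence a brick by Lemma~\ref{bricktobrick}, contradicting minimality of $G''$. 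Without invoking minimality a second time in this way, the $3$-connectivity step fails. A related issue: your structural claim that the four new vertices induce the path $v'u'pq$ (with $u'$ the unique gadget vertex of degree $4$) breaks down precisely in these degenerate identifications (e.g.\ $v'\in\{r,s,t\}$ is possible a priori); the paper disposes of the analogous possibility $v'\in\{s,t\}$ as a separate preliminary claim, again via minimality, before the matching casework begins.
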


\begin{lemma}\label{highavdeg}
Every minimal brick $G$ of average degree $d(G)\geq 4+\delta$ with $\delta>0$
has at least $(4\delta-3)|V(G)|$ vertices of degree~$3$.
\end{lemma}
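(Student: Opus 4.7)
The plan is to consider a brick-on-brick sequence $G_0 \to \ldots \to G_k = G$ of minimal bricks, which exists by Theorem~\ref{generatingminbricks} (note that $d(G)>3$ precludes $G$ being the Petersen graph). By Lemma~\ref{lotsofquad}, the sequence uses many conservative-quadratic extensions: $\nu^c_2 \geq \delta n$, where $n = |V(G)|$. Each such extension creates two new vertices of degree~$3$ in the brick it produces, so there are $\nu^c_2$ potentially-surviving degree-$3$ vertices; I would track how many of them still have degree~$3$ in $G$. By property~(\ref{funddeg}) such a vertex retains degree~$3$ in $G$ unless it appears in the fundament of some later extension. Let $B$ denote the set of conservative-quadratic new vertices that fail to yield a degree-$3$ vertex of $G$, either because a later bisplit removes them or because a later fundament role raises their degree. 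The heart of the proof will be to show
\[
|B|\;\leq\;3(n-\nu^c_2),
\]
for then
\[
n_3(G)\;\geq\;\nu^c_2-|B|\;\geq\;4\nu^c_2-3n\;\geq\;(4\delta-3)n.
\]

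The strategy for bounding $|B|$ is to charge each $v\in B$ to a slot in the fundament of a \emph{non}-conservative-quadratic extension of the sequence; by~(\ref{fundsize}), summing over all non-conservative-quadratic extensions, the total number of such slots is at most $3(n-\nu_0-\nu^c_2)\le 3(n-\nu^c_2)$, giving the estimate. The key lemma-like statement enabling the charging is the following: if a conservative-quadratic new vertex $v$ (created at step $i$) lies in the fundament of some later conservative-quadratic extension $e_j$, then either $v$ or its sibling $v^*$ (the other new vertex of $e_i$) lies in the fundament of some non-conservative-quadratic extension strictly between $e_i$ and $e_j$. Indeed, if no such intermediate non-conservative-quadratic fundament contained $v$ or $v^*$, then Lemma~\ref{reorderlem} would allow us to push $e_i$ forward past all intermediate extensions, making $e_i$ immediately precede $e_j$ so that the two form consecutive conservative-quadratic extensions with a new vertex of $e_i$ in the fundament of $e_j$; Lemma~\ref{quadonquad} would then contradict the minimality of $G_j$.

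The main obstacle is the concluding charging step. The key statement produces only $v$ \emph{or} $v^*$---not necessarily $v$ itself---in a non-conservative-quadratic fundament, while both members of a sibling pair could simultaneously lie in $B$; a naive counting would then lose a factor of two. Overcoming this requires matching each $v \in B$ uniquely to its own non-conservative-quadratic slot without double-counting, via a case analysis that exploits the uniqueness of the sibling relation together with the fact that a non-conservative-quadratic fundament slot already containing $v$ is never also `borrowed' as a sibling-witness for another vertex of $B$.
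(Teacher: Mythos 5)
Your plan follows the paper's own proof very closely in its broad structure: take a brick-on-brick sequence of minimal bricks via Theorem~\ref{generatingminbricks}, apply Lemma~\ref{lotsofquad} to produce $\nu^c_2\geq\delta n$ new conservative-quadratic vertices, argue via~\eqref{funddeg} that each survives as a degree-$3$ vertex of $G$ unless it is later recruited into some fundament, and bound the number of recruited vertices by about $3(n-\nu^c_2)$ using~\eqref{fundsize} together with the reorder machinery of Lemmas~\ref{reorderlem} and~\ref{quadonquad}. Your charging of the set $B$ to non-conservative-quadratic fundament slots and the paper's count via $\nu(I)$ (where $I$ records first-appearance indices of $Q_1$-vertices) are the same estimate in different clothing: both hinge on showing that these first appearances are not at conservative-quadratic steps.

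The obstacle you flag is genuine and is precisely where your writeup falls short of a proof. Your ``key claim''---that the \emph{first} fundament appearance of \emph{either} sibling of a conservative-quadratic pair must be at a non-conservative-quadratic step---is indeed what the reorder argument proves: push the creating extension forward until one of $v,v^*$ is first required, and if that step were conservative-quadratic then Lemma~\ref{quadonquad} would contradict minimality. But, as you correctly note, this assigns only one non-conservative-quadratic slot per sibling \emph{pair}, not per vertex, which yields $|B|\leq 2\cdot 3(n-\nu^c_2)$ and hence the useless $(7\delta-6)n$ in place of the needed $(4\delta-3)n$. The repair you gesture at in the last sentence---that a slot housing $v$ is ``never borrowed'' for $v^*$---is an assertion, not an argument: when $v$ first appears at a non-conservative-quadratic step $m$ while $v^*$ first appears only later and at a conservative-quadratic step, there simply is no second non-conservative-quadratic slot for $v^*$, and nothing in your claim produces one. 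It is worth saying that this same sibling issue lurks in the paper's phrasing (``apply Lemma~\ref{reorderlem} repeatedly to finally obtain $G'_{j-2}\to G_{j-1}\to G_j$''), since Lemma~\ref{reorderlem} requires \emph{both} new vertices $p,q$ to avoid the intermediate fundaments and the definition of $I$ only controls $q$; so this is exactly the delicate point one must nail down, and your proposal has identified it without resolving it.
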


\begin{proof}
By Theorem~\ref{generatingminbricks}, there is a brick-on-brick sequence 
 $\mathcal B:=G_0\to \ldots\to G_k$ for $G$, 
where all intermediate graphs are minimal bricks. 
With Lemma~\ref{lotsofquad} we find that 
\begin{equation}\label{nu2strich}
\nu^c_2\geq \delta  |V(G)|.
\end{equation}

This means that there is a set $Q$ of at least $\delta |V(G)|$ vertices that arise as new vertices in some conservative-quadratic extension of $\mathcal B$. 
Denote by $Q_1$ the set of those vertices in $Q$ that are used in the fundament of any later extension  of $\mathcal B$, and let $Q_2:=Q\setminus Q_1$. Then $Q_2\subseteq  V(G)$ and the vertices of  $Q_2$ have degree~$3$ in $G$ by~\eqref{funddeg}. 

Hence if $|Q_2|\geq (4\delta -3)|V(G)|$, then we are done. So assume otherwise. Then
\begin{equation}\label{vielefundis}
|Q_1| = |Q| -|Q_2|> \delta |V(G)|-(4\delta-3)|V(G)|=3(1-\delta)|V(G)|.
\end{equation}

Let $I$ be the set of indices of extensions of $\mathcal B$ that use some vertex of $Q_1$ in their fundament which has not been used in the fundament of earlier extensions of $\mathcal B$. Then~\eqref{fundsize} together with~\eqref{vielefundis} implies that $\nu (I) > (1-\delta)|V(G)|$.

This means that by~\eqref{summe} and by~\eqref{nu2strich}, there is an index $j\in I$ that corresponds to a con\-ser\-vative-quadratic extension  $G_{j-1}\to G_j$ of $\mathcal B$. 
Let  $q\in Q_1$ lie in the fundament of this extension. 

We apply Lemma~\ref{reorderlem} repeatedly in order to finally obtain a brick $G'_{j-2}$
so that 
\[
G'_{j-2}\to G_{j-1}\to G_j
\]
is a brick-on-brick sequence, with $q$ being one of the new vertices in 
the con\-ser\-vative-quadratic extension $G'_{j-2}\to G_{j-1}$. 
This contradicts Lemma~\ref{quadonquad}.
\end{proof}

We are now ready to prove our main theorem.

\begin{proof}[Proof of Theorem~\ref{thmdeg4}]
Given a minimal brick $G$ we distinguish two cases. If the average degree 
of $G$ is at least $4+\frac{7}{9}$, then we apply Lemma~\ref{highavdeg}
to see that at $\frac{1}{9}|V(G)|$ of the vertices have degree~$3$. 

So, we may assume that $G$ has average degree  at most $5- \frac{2}{9}$. 
Denote by $V_{\leq 4}$ the set of all vertices of degree at most~$4$, and by 
$V_{\geq 5}$ the set of all vertices of degree at least~$5$. 
Then
\[
\left(5-\frac{2}{9}\right)|V(G)|\geq \sum_{v\in V(G)}d(v)\geq 3|V_{\leq 4}|+5|V_{\geq 5}|
=5|V(G)|-2|V_{\leq 4}|,
\]
which leads to $|V_{\leq 4}|\geq \frac{1}{9}|V(G)|$.
 
In either case we find that at least a ninth of the vertices of $G$ have degree at most~$4$.
\end{proof}

\section{Proof of Lemma~\ref{quadonquad}}

\newcommand{\case}[1]{\smallskip\noindent\textbf{#1:}}

We dedicate this section entirely to the proof of Lemma~\ref{quadonquad}.
We shall use the notation from Figure~\ref{twicequad}, that is, $\{x,u,v,y\}$ is the 
fundament of the conservative-quadratic extension $G\to G'$, and 
$\{u',r,s,t\}$ is the fundament of the conservative-quadratic extension $G'\to G''$, with new vertices $r'$ and $s'$, where $r'$ is adjacent to $u'$ and $r$, and $s'$ is adjacent to $s$ and~$t$. Several of these vertices may be identified, some of them are by
definition distinct:
\begin{equation*}
u\neq x,\, v\neq y,\, u'\neq r,\, s\neq t,
\emtext{ and } u',v',r',s' \emtext{ are pairwise distinct. }
\end{equation*}

\begin{figure}[ht]
\centering
\includegraphics[scale=1]{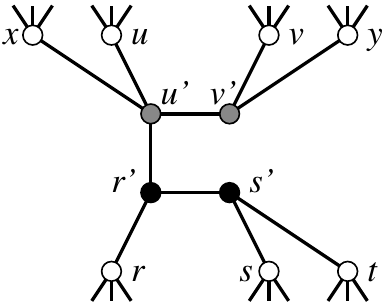}
\caption{Applying two quadratic extension on top of each other.
Note that several of the vertices in the figure might be identified. 
}\label{twicequad}
\end{figure}

Assume for contradiction that $G''$ is a minimal brick. 
From this we will deduce that $G''-uu'$ is bicritical and $3$-connected, 
which is clearly impossible.

\medskip

We start by  proving that
\begin{equation}\label{vstrich}
\text{ $v'\notin \{s,t\}$}.
\end{equation}
Indeed, suppose otherwise. Then, as $G''$ is a conservative-quadratic  
extension of $G'$, the graph
$G''-v'u'$ is a quadratic extension of $G'$ (with $v'$ and $u'$ constituting the upper fundament of $G'\to G''-v'u'$). Hence $G''-v'u'$ is a brick. Thus $G''$ is not minimal, against our assumption.

Next, we show that
\begin{equation}\label{oneoftwo}
\emtext{
$G''-uu'$ is bicritical.
}
\end{equation}

For this, let $a,b$ be two vertices of $G''$. 
Our aim is to find a perfect matching of $G''-a-b$ that avoids $uu'$.
We may assume that neither of $a$, $b$
is incident with $uu'$ as otherwise any perfect matching of $G''-a-b$ serves for our purpose (and exists since $G''$ is bicritical). Therefore, the following three cases cover all possible cases (after possibly swapping $a$ and $b$).

\case{Case 1} $a,b\in V(G)$.

 Since $G$ is bicritical there is a perfect
matching $M$ of $G-a-b$. This matching together with the edges $u'v'$ and
$r's'$ yields a perfect matching of $G''-a-b$ that avoids $uu'$.

\case{Case 2} $a\in V(G)$ but $b\notin V(G)$.

Our aim is to find a substitute $b'\in V(G)$ different from $a$, so
that a perfect matching $M$ of $G-a-b'$ together with two edges $e,f$
form a perfect matching $M'$ of $G''-a-b$ that avoids $uu'$. 

{\em Subcase:}
$b=s'$. As $v\neq y$, we may choose
  $b'\in\{v,y\}$ distinct from $a$,
and let $M':=M+u'r'+v'b'$.

{\em Subcase:}
 $b\in\{v',r'\}$ and  $\{s,t\}\neq\{u',a\}$. 
 Choose
$b'\in\{s,t\}$  distinct from $u'$ and $a$, and note that~\eqref{vstrich} implies that $b'\neq v'$.
Then the matching $M':=M+u'v'+s'b'$ if $b=r'$
or  the matching  $M':=M+u'r'+s'b'$ if $b=v'$
 is as desired.

{\em Subcase:}
 $b=v'$ and  $\{s,t\}=\{u',a\}$. 
 Note that then $r\neq a$. Choose
$b':=r$.
Then the matching $M':=M+u's'+r'b'$  is as desired.

{\em Subcase:}
 $b=r'$ and  $\{s,t\}=\{u',a\}$. 
 Choose
$b'\in\{v,y\}$  distinct from $a$.
Then the matching $M':=M+u's'+v'b'$  is as desired.

\case{Case 3} $a,b\notin V(G)$. 

Then $a,b\in \{v',r',s'\}$. 
If $\{a,b\}=\{r',s'\}$ we take a perfect matching $M$ of $G$ plus $u'v'$. 
If $\{a,b\}=\{v',s'\}$ we  choose a perfect matching of $G$ 
together with $u'r'$. 

It remains the case when $\{a,b\}=\{r',v'\}$.
We choose  $b'\in\{s,t\}$  distinct from $x$. By~\eqref{vstrich} we have $b'\neq v'$. If $b'=u'$ then a perfect matching
of $G$ together with $u's'$ is as desired. Otherwise, we can use
a perfect matching
of $G-x-b'$ together with $u'x$ and $s'b'$.

We have thus proved~\eqref{oneoftwo}.

\medskip

We finish the proof of the lemma by showing that
\begin{equation}\label{3zsh}
\emtext{
 $G''-uu'$ is $3$-connected.
}
\end{equation}


Suppose otherwise. 
Then there are vertices $w,z$ such that $G''-uu'-w-z$ is disconnected. In other words, $G''-uu'$ is the union of two subgraphs $A,B$ with $\{w,z\}= V(A\cap B)$ and $A\sm B\neq \emptyset\neq B\sm A$. Since $G''$ is $3$-connected we know that $w\neq z$ and that $uu'$ goes from $A\setminus B$ to $B\setminus A$. Say $u\in A\sm B$ and $u'\in B\sm A$.
Since $G''$ is $3$-connected we know that $w\neq z$ and that $uu'$ goes from $A- B$ to $B- A$. Say $u\in V(A-B)$ and $u'\in V(B-A)$.

Since $G$ is $3$-connected,  all of $G$ is contained  
in either $A$ or in $B$.
As $u\notin V(B)$, it must be that $G\subseteq A$. 
Thus $x$, as a neighbour of $u'$, lies in $A\cap B=\{w,z\}$. Say $x=w$.

Now, either $v'=z$ or $v'\in B\sm A$ and $\{v,y\}=\{x, z\}$ (then, in particular, one of $v,y$ is equal to $z$). In either case, 
we find that the the new vertices $r',s'$ of $G''$ lie in $B-A$. 
For the neighbours $r,s,t$ of $r'$ and $s'$ we deduce that
\begin{equation}\label{rst}
\{r,s,t\}\subseteq \{u',v',x, z\}. 
\end{equation}

We claim that $x\notin\{s,t\}$. Otherwise, say if $x=s$, we perform a quadratic extension 
in $G'$ with fundament $\{u',x,r,t\}$ and upper fundament $\{u',x\}$. In 
this way, the edge $u'x$ vanishes in the 
quadratic extension $G'\to G''-xu'$. Thus, $G''-xu'$ is a brick, which contradicts
the minimality of $G''$.
With a similar reasoning we see $v'\notin\{s,t\}$.

Thus, one of $s,t$ must be equal to $u'$, and the other equal to $z$ (recall that $u'\neq z$).
As the fundament of the quasiquadratic extension $G'\to G''$, the set
$\{r,s,t,u'\}$ contains at least three vertices. 
 By~\eqref{rst},  this implies that $r$ is either $x$ or $v'$. 
But then, either choosing $u'$ and $x$ as the upper fundament of the quadratic extension $G'\to G''-xu'$, or choosing $u'$ and $v'$ as the upper fundament of the quadratic extension $G'\to G''-u'v'$, we find a contradiction to the fact that $G''$ is a minimal brick. 
This concludes the proof of~\eqref{3zsh}.

\section{Discussion}

In this work, we proved that in a minimal brick the number of vertices of degree~$\leq 4$ is a positive fraction of the total 
number of vertices.
 On the other hand, if we look for large degree vertices in a minimal brick, it is not difficult to find examples with
a  few vertices of arbitrary large degree (for instance even wheels). 
It seems less evident that one can also construct minimal bricks with many vertices of degree $\geq 5$. 
We provide an example in Figure~\ref{ladderplus}, where about a seventh of the vertices  have degree~$6$. 
This graph is a indeed brick, since it can be built from the triple ladder of Figure~\ref{tripleladderbrick}
by performing two quadratic extensions at triples like $r,s,t$. It is a minimal brick as clearly
every edge is necessary for $3$-connectivity. 
\begin{figure}[ht]
\centering
\includegraphics[scale=0.7]{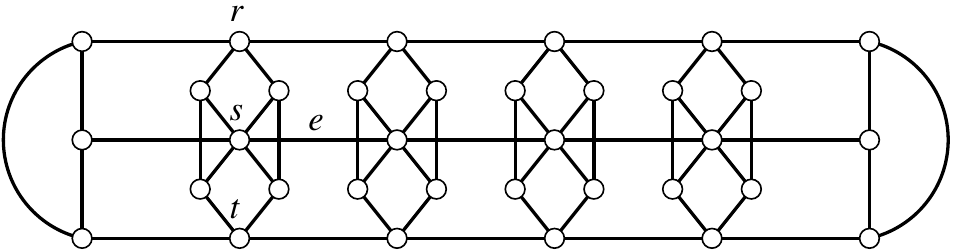}
\caption{A minimal brick}\label{ladderplus}
\end{figure}

Vertices of degree $\leq 4$ and even cubic vertices  seem to be abundant in all examples. 
In the example with fewest proportion of degree~$3$ vertices we know, the triple ladder
in Figure~\ref{tripleladderbrick}, they still make up two thirds of the vertices. In
that respect, our result with a fraction of $\geq \frac{1}{9}$ of the vertices seems quite low.

\smallskip

The main aim of this paper was to develop ideas and techniques that ultimately should serve
to settle the Norine-Thomas conjecture. While we believe to have done a substantial step 
in that direction, there are still serious obstacles lying on that route. Let us briefly
outline some of them.

Clearly, an average degree of at most $4-\gamma$ (for some small constant $\gamma>0$)
yields a positive fraction of degree~$3$ vertices. We may therefore assume that our 
minimal bricks have average degree of about $4$ and higher. 
While an average degree of about $5$ and higher leads to a brick-on-brick sequence with many  conservative-quadratic extensions (cf.~Lemma~\ref{lotsofquad}),  the now lower bound on the average degree will give us less information on the kind of extensions our brick-on-brick sequence is composed of. In particular,  quadratic  and conservative-quartic (those that do not involve edge deletions) might appear, as they push the average degree towards $4$. Even worse, because
conservative-quadratic extensions yield a relatively large edge-density increase, we may also 
have lots of strict linear, bilinear or pseudolinear extensions.

To handle this, we would seem to need a much stronger version of Lemma~\ref{quadonquad},
that also forbids two chained quadratic extensions, say, that increase the degree
of a fundament vertex. Unfortunately, two such extension might actually occur while still yielding a minimal
brick: This is exactly what happened to produce the degree~$6$ vertices in
Figure~\ref{ladderplus}.

\section{Acknowledgment}
The second author would like to thank Andrea Jim\' enez for inspiring discussions, and for drawing her attention to the subject.

\bibliographystyle{amsplain}
\bibliography{brickbib}

\small
\vskip2mm plus 1fill
Version 2 Oct 2012
\bigbreak

\noindent
\begin{tabular}{cc}
\begin{minipage}[t]{0.5\linewidth}
Henning Bruhn
{\tt <bruhn@math.jussieu.fr>}\\
Combinatoire et Optimisation\\
Universit\'e Pierre et Marie Curie\\
4 place Jussieu\\
75252 Paris cedex 05\\
France
\end{minipage}
&
\begin{minipage}[t]{0.5\linewidth}
Maya Stein
{\tt <mstein@dim.uchile.cl>}\\
Centro de Modelamiento Matem\'atico\\
Universidad de Chile\\ 
Blanco Encalada, 2120\\ 
Santiago\\
Chile
\end{minipage}
\end{tabular}

\end{document}